\theoremstyle{plain}
\newtheorem*{rep@theorem}{\rep@title}
\newcommand{\newreptheorem}[2]{%
\newenvironment{rep#1}[1]{%
\def\rep@title{#2 \ref{##1}}%
\begin{rep@theorem}}%
{\end{rep@theorem}}}
\def\oversortoftilde#1{\mathop{\vbox{\m@th\ialign{##\crcr\noalign{\kern3\p@}%
      \sortoftildefill\crcr\noalign{\kern3\p@\nointerlineskip}%
      $\hfil\displaystyle{#1}\hfil$\crcr}}}\limits}
\def\sortoftildefill{$\m@th \setbox\z@\hbox{$\braceld$}%
  \braceld\leaders\vrule \@height\ht\z@ \@depth\z@\hfill\braceru$}
\newtheorem{thm}{Theorem}[section]
\newtheorem{lemma}[thm]{Lemma}
\theoremstyle{definition}
\newtheorem{definition}[thm]{Definition}
\newtheorem{example}[thm]{Example}
\renewcommand{\epsilon}{\varepsilon}
\let\theta\vartheta
\let\phi\varphi
\DeclareMathAlphabet{\doba}{U}{msb}{m}{n}
\newcommand{\bq}{\begin{equation}}
\newcommand{\eq}{\end{equation}}
\newcommand{\definedas}{\mathrel{\raise.095ex\hbox{\rm :}\mkern-5.2mu=}}
\begin{document}
%

\title
[]
{Hybrid functions approach to solve a class of Fredholm and Volterra Integro-Differential equations}

\author{Aline Hosry, Roger Nakad and Sachin Bhalekar}

\address{Aline Hosry \\
Lebanese University \\
Faculty of Science II\\
Department of Mathematics\\
P. O. Box 90656, Fanar, El Metn\\
Lebanon}
\email{aline.hosry@ul.edu.lb}

\address{Roger Nakad \\
Notre Dame University-Louaiz\'e \\
Faculty of Natural and Applied Sciences\\
Department of Mathematics and Statistics\\
P.O. Box 72, Zouk Mikael\\
Lebanon}
\email{rnakad@ndu.edu.lb}

\address{Sachin Bhalekar \\
Shivaji University\\
Department of Mathematics\\
Vidyanagar, Kolhapur, 416 004\\
India}
\email{\texttt{sbb\_maths}@unishivaji.ac.in}

\thanks{The first two authors are listed by alphabetical order of their family names.}

\date{\today}



\begin{abstract} 
In this paper, we use a numerical method that involves hybrid and block-pulse functions to  approximate solutions of systems of a class of Fredholm and Volterra integro-differential equations. The key point is to derive a new approximation for the derivatives of the solutions and then  reduce the integro-differential equation to a system of algebraic equations that can be solved using classical methods. Some numerical examples are dedicated for showing efficiency and validity of the method that we introduce.

\end{abstract}
\maketitle
\tableofcontents

\section{Introduction}

Scientific researchers have explored the topic of integro-differential equations through their work in various fields of science such as physics \cite{phs}, biology \cite{bio1} and engineering \cite{eng1, eng2} and in numerous applications 
such as heat transfer, neurosciences \cite{neuro}, diffusion process, neutron diffusion, biological species \cite{Basirat-Malek-Hashemi, neutron}, biomechanics, economics, electrical engineering, electrodynamics, electrostatics, filtration theory, fluid dynamics, game theory, oscillation theory, queuing theory \cite{qu}, airfoil theory \cite{ar}, elastic contact problems \cite{el1, el2}, fracture mechanics \cite{fm}, combined infrared radiation and molecular conduction \cite{mol} and so on.\\

In recent years, many different basic functions have been used to estimate the solution of integral equations, such as orthogonal functions and wavelets. Three families of the orthogonal functions are classified: piecewise constant orthogonal functions (e.g., Walsh, Haar, block-pulse, etc.), orthogonal polynomials (e.g., Legendre, Laguerre, Chebyshev, etc.) and sine-cosine functions in the Fourier series. For instance, many authors investigated the general $k^{th}$ order integro-differential equation 

\begin{eqnarray}\label{genequation}
y^{(k)}(t) + l(t) y(t) + \int_a^b g(t, s )y^{(m)}(s) ds = f(t),
\end{eqnarray}
with initial conditions $$y(a) = a_0, \dots, y^{(n-1)}(a) = a_{n-1},$$
where $a_0, \ldots, a_{n-1}$ are real constants, $k, m $ are positive integers and $m< k$, the functions $l, f, g$ are given and $y(t)$ is the solution to be determined.  In \cite{HeHom}, the authors applied the homotopy perturbation method to solve Equation (\ref{genequation}), while in \cite{VIM2, VIM1}, the authors changed the equation to an ordinary integro-differential equation and applied the variational iteration method to solve it so that the Lagrange multipliers can be effectively identified. Using the operational matrix of derivatives of hybrid functions, a numerical method  has been presented in \cite{Hybgen} to solve Equation (\ref{genequation}). In \cite{hemada}, Hemeda used the iterative method introduced in \cite{NIM} to solve the more general  equation 
\begin{eqnarray}\label{genequation2}
y^{(k)}(t) + l(t) y(t) + \int_a^b g(t, s )y^{(n)}(s)y^{(m)}(s) ds = f(t),
\end{eqnarray}
where $n \leq m <q$. \\ 

In this paper, we use block-pulse and hybrid functions to approximate solutions $y(t)$ of the Fredholm integro-differential system given by
\begin{eqnarray}
\left \{
\begin{array}{l} \label{system1}
y(t)+\lambda \int_0^1 k(t,s) y^{(m)}(s) y^{(n)}(s) ds = f(t),\\ \\
y(0)=a_0,\dots, y^{(l)}(0)=a_l,
\end{array}
\right.
\end{eqnarray}
and solutions  $y(t)$ of the Volterra integro-differential system given by
\begin{eqnarray}
\left \{
\begin{array}{l} \label{system2}
y(t)+\beta  \int_0^t g(t,s) y^{(m)}(s) y^{(n)}(s) ds=f(t),\\ \\
y(0)=a_0,\dots, y^{(l)}(0)=a_l.
\end{array}
\right.
\end{eqnarray}

Here $m, n $ are positive integers,  $l=\mathrm{max}(m,n)-1$,  $a_0,\dots,a_l$ are initial conditions, the parameters $\beta, \lambda$ and the functions $k(t, s), g(t, s)$ and $f(t)$ are known and belong to $L^2 [0, 1)$.  The function $y(t)$ as well as its derivatives $y^{(n)}$ and $y^{(m)}$ are unknown. We point out that System (\ref{system1}) is a particular case of Equation (\ref{genequation2}). \\

Hybrid functions have been applied extensively for solving differential  systems and proved to be a useful mathematical
tool. The pioneering work via hybrid functions was led by the authors in \cite{13H,18R}, who first derived an operational
matrix for the integral of the hybrid function vector, and paved the way for the hybrid function analysis of the dynamic systems. Since then, the hybrid functions' approach has been improved and used to approximate differential equations or
systems (see \cite{Hsiao, Malek-Basirat-Hashemi, Miza, Miza1, Miza2, Miza3, Miza4} and the references therein). \\

The novelty and the key point in solving Systems (\ref{system1}) and (\ref{system2}) are to use some useful properties of hybrid functions to derive a new approximation  $Y^{(n)}$ of the derivative  $y^{(n)} (t)$ of order $n$ of the solution $y(t)$ (see Lemma \ref{fund}). Hence,  Systems (\ref{system1}) and (\ref{system2}) can be converted into  reduced algebraic systems. \\

For arbitrary positive integers $q$ and $r$, the  set $\{b_{km}(t)\},\, k = 1, 2, \dots ,q, \; m = 0, 1, \dots,r- 1$ of hybrid functions will be used to approximate the solution $y_{r-1}(t)$ of the given system of integro-differential equations. This approximate solution  will involve Legnedre polynomials of degree $r-1$ defined on $q$ subintervals of $[0, 1]$.\\

This paper is organized as follows. In Section \ref{pre}, we introduce hybrid functions and its properties. In Section \ref{Main}, we describe the method for approximating solutions of the Fredholm and Volterra integro-differential Systems (\ref{system1}) and (\ref{system2}). An upper bound of the error is given in Section \ref{Error}, and finally numerical results are reported in Section \ref{Examples}.

\section{Preliminaries}\label{pre}

In this section, we define the Legendre polynomials $p_m(t)$, as well as block-pulse and  hybrid functions. We also recall functions' approximation in the Hilbert space $L^2[-1,1]$.  \\

The Legendre polynomials $p_m(t)$  are polynomials of degree $m$ defined on the interval $[-1,1]$ by
$$p_m(t)= \sum_{k=0}^{M} \frac{(-1)^k (2m-2k)!}{2^m k!(m-k)! (m-2k)!} t^{m-2k}, \;\; m \in \mathbb{N},$$
where $M=   \left \{
\begin{array}{l}
\frac{m}{2}, \qquad  \mbox{if} \; m \; \mbox{is even,} \\ \\
\frac{m-1}{2}, \quad \, \mbox{if} \; m \; \mbox{is odd.}\\
\end{array}
\right.
$\\

Equivalently, the Legendre polynomials are given by the recursive formula (see \cite{Basirat-Malek-Hashemi, Hashemi-Basirat, Malek-Basirat-Hashemi, Sho-Abadi-Golpar})
\[\begin{array}{l}
p_0(t)=1, \quad p_1(t)=t,\\\\
p_{m+1}(t)=\frac{2m+1}{m+1}tp_m(t)-\frac{m}{m+1}p_{m-1}(t), \;\; m=1,2,3,\dots.
\end{array}\]

The set $\{p_m(t); \;t = 0, 1, \dots\}$ is a complete orthogonal system in $L^2[-1, 1]$.

\begin{definition} \cite{Basirat-Malek-Hashemi, Hashemi-Basirat, Malek-Basirat-Hashemi}
For an arbitrary positive integer $q$, let $\{b_k(t)\}_{k=1}^q$ be the finite set of block-pulse functions on the interval $[0, 1)$ defined by
 \begin{eqnarray*}
b_k(t)=\left \{
\begin{array}{l}
1, \quad \mbox{if} \; \frac{k-1}{q} \le t < \frac{k}{q}, \\ \\
0, \quad \mbox{elsewhere.}\\
\end{array}
\right.
\end{eqnarray*}

\end{definition}

The block-pulse functions are disjoint and have the property of orthogonality on $[0, 1)$, since for $i,j=1,2,\dots, q$, we have:
 \begin{eqnarray*}
b_i(t) b_j(t)=\left \{
\begin{array}{l}
0, \qquad \;\mbox{if} \; i \neq j, \\ \\
b_i(t), \quad \mbox{if} \; i = j, \\
\end{array}
\right.
\end{eqnarray*}
and
 \begin{eqnarray*}
\< b_i(t),  b_j(t)\>=\left \{
\begin{array}{l}
0, \quad \mbox{if} \; i \neq j, \\ \\
\frac {1}{q}, \quad \mbox{if} \; i = j,\\
\end{array}
\right.
\end{eqnarray*}
where $\<., .\>$ is the scalar product given by $\<f,g\> = \int_0^1 f(t)g(t)dt$, for any functions $f, g \in L^2[0, 1)$.
\begin{definition}\cite{Basirat-Malek-Hashemi, Hashemi-Basirat, Malek-Basirat-Hashemi, Sho-Abadi-Golpar}
Let $r$ be an arbitrary positive integer. The set of hybrid functions $\{b_{km}(t)\},\, k = 1, 2, \dots ,q, \; m = 0, 1, \dots,r- 1$, where $k$ is the order for block-pulse functions, $m$ is the order for Legendre polynomials and $t$ is the normalized time, is defined on the interval $[0, 1)$ as
 \begin{eqnarray*}
b_{km}(t)=\left \{
\begin{array}{l}
p_m(2qt-2k+1), \qquad \mbox{if} \; \frac{k-1}{q} \le t < \frac{k}{q}, \\ \\
0, \qquad \qquad \qquad \qquad \quad \mbox{elsewhere}.\\
\end{array}
\right.
\end{eqnarray*}
\end{definition}

Since $b_{km}(t)$ is the combination of Legendre polynomials and block-pulse functions which are both complete and orthogonal, then the set of hybrid functions is a complete orthogonal system in $L^2[0, 1)$.\\

We are now able to define the vector function $B(t)$ of hybrid functions on $[0, 1)$  by  $$B(t) = \Big (B_1^T (t), \dots, B_q^T (t) \Big)^T,$$
where $B_i (t) = \Big (b_{i0} (t), \dots, b_{i(r-1)}(t)\Big )^T$, for $i=1,2, \dots, q$, and $V^T$ denotes the transpose of a vector $V$. \\

{\bf Function approximation \cite{Basirat-Malek-Hashemi, Hashemi-Basirat, Malek-Basirat-Hashemi, Sho-Abadi-Golpar}:}  Every function $f(t) \in L^2[0,1)$ can be approximated as
$$f(t) \simeq \sum_{k=1}^{q} \sum_{m=0}^{r-1}  f_{km}b_{km}(t),$$ where
$$f_{km}= \frac{\langle f(t), b_{km}(t)\rangle}{\langle b_{km}(t), b_{km}(t) \rangle}, \, \forall \, k=1,\dots,q, \; \forall \, m=0,\dots,r-1.$$
Thus,  
\begin{eqnarray}\label{fctF}
f(t) \simeq  F^T B(t) = B^T(t)F,
\end{eqnarray} 
 where $F$ is the $rq \times 1$ column vector having $f_{km}$ as entries.  In a similar way, any function $g(t,s) \in  L^2\big([0, 1) \times [0, 1)\big)$ can be approximated as
\begin{eqnarray}\label{matrixG}
g(t,s) \simeq B^T(t) G B(s),
\end{eqnarray} 
where $G = (g_{ij})$ is an $rq \times rq$ matrix given by
$$g_{ij}= \frac{\langle B_{(i)}(t), \langle g(t,s),B_{(j)}(s)\rangle \rangle}{\langle B_{(i)}(t),B_{(i)}(t)\rangle \langle B_{(j)}(s),B_{(j)}(s)\rangle}, \; i,j=1,2,\dots,rq,$$ and $B_{(i)}(t)$  \big(resp. $B_{(j)}(s)$\big) denotes the $i^{th}$ component \big(resp. the $j^{th}$ component$\big)$ of $B(t)$ \big(resp. $B(s)$\big). \\

{\bf Operational matrix of integration \cite{Basirat-Malek-Hashemi, Hashemi-Basirat, Malek-Basirat-Hashemi, Sho-Abadi-Golpar}:} The integration of the vector function $B(t)$ may be approximated by $\int_0^t B(t') dt' \simeq P B(t)$, where $P$
is an $rq \times rq$ matrix known as the operational matrix of integration and given by

 \[ P =  \left( \begin{array}{ccccc}
E & H & H & ... & H \\
0 & E & H & ... & H \\
0 & 0 & E & ... & ... \\
. & . & . & ...& H \\
0 & 0 & 0 & ... & E
\end{array} \right)\]

where $H$  and $E$ are $r \times r$ matrices defined by

\[  H = \frac 1q  \left( \begin{array}{ccccc}
1 & 0 & 0 & ... & 0 \\
0 & 0 & 0 & ... & 0 \\
0 & 0 & 0 & ... & 0 \\
. & . & . & ...& 0 \\
0 & 0 & 0 & ... & 0
\end{array} \right)\]
 and

\[ E= \frac{1}{2q}\left( \begin{array}{cccccccc}
1 & 1 & 0 & 0 & ... & 0& 0 & 0 \\

-\frac 13 & 0 & \frac 13 & 0 & ... & 0& 0 & 0 \\
0 & -\frac 15 & 0 & \frac 15 & ... & 0& 0 & 0 \\
.. & .. & .. & .. & ... & ..& .. & .. \\
0 & 0 & 0 & 0 & ... & -\frac{1}{2r-3}& 0 & \frac{1}{2r-3} \\
0 & 0 & 0 & 0 & ... & 0& -\frac{1}{2r-1} & 0
\end{array} \right).\] \\\\
{\bf The integration of two hybrid functions \cite{Basirat-Malek-Hashemi, Hashemi-Basirat, Malek-Basirat-Hashemi, Sho-Abadi-Golpar}:} The integration of the cross product of two hybrid function vectors is given by $L = \int_0^1 B(t) B^T(t) dt,$ where $L$ is the $rq \times rq$ diagonal matrix defined by
 \[ L= \left( \begin{array}{cccccccc}
D & 0 & 0 & 0 & ... & 0& 0 & 0 \\
0 & D  & 0 & 0 & ... & 0 & 0 & 0 \\
.. & .. & .. & .. & ... & .. & .. & .. \\
.. & .. & .. & .. & ... & D & .. & .. \\
0 & 0 & 0 & 0 & ... & 0& D & 0 \\
0 & 0 & 0 & 0 & ... & 0& 0 & D
\end{array} \right)\]

where $D$ is the $r \times r$ matrix given by

 \[ D= \frac 1q \left( \begin{array}{cccccccc}
1 & 0 & 0 & 0 & ... & 0& 0 & 0 \\
0 & \frac 13  & 0 & 0 & ... & 0 & 0 & 0 \\
.. & .. & .. & .. & ... & .. & .. & .. \\
0 & 0 & 0 & 0 & ... & 0& 0 & \frac{1}{2r-1}
\end{array} \right).\] \\

{\bf  The matrix $\widetilde{C}$ associated to a vector $C$ \cite{Basirat-Malek-Hashemi, Hsiao, Malek-Basirat-Hashemi, Sho-Abadi-Golpar}:}  For any $rq \times  1$ vector $C$, we define the $rq \times rq$ matrix $\widetilde{C}$ such that
$$B(t)B^T(t) C= \widetilde{C} B(t).$$
$\widetilde{C}$ is called the coefficient matrix. In \cite{Hsiao}, Hsiao computed the matrix $\widetilde{C}$ for $r=2$ and $q=8$, while the authors in \cite{Basirat-Malek-Hashemi} considered the case of $r=4$ and $q=3$.\\

{\bf The vector $\widehat S$ associated to a matrix $S$}: For any $rq \times rq$ matrix $S$, we define the  $1 \times rq$ row vector $\widehat { S }$ such that $B^T(t) S B(t)=\widehat S B(t).$ For instance, let $S$ be a $12\times 12$ matrix with coefficients $s_{11},s_{12},\dots, s_{(12)(11)}, s_{(12)(12)}$. After developing and comparing the two sides of the equation $B^T(t) S B(t)=\widehat S B(t)$, we deduce that the $1\times 12$ row vector $\widehat{S}$ is given by:

$$\widehat{S}=\begin{pmatrix}
s_{11}+\frac{1}{3}s_{22}+\frac{1}{5}s_{33} \\
s_{12}+s_{21}+\frac{2}{5}s_{23}+\frac{2}{5}s_{32} \\
s_{13}+s_{31}+\frac{2}{3}s_{22}+\frac{2}{7}s_{33}\\
s_{44}+\frac{1}{3}s_{55}+\frac{1}{5}s_{66} \\
s_{45}+s_{54}+\frac{2}{5}s_{56}+\frac{2}{5}s_{65} \\
s_{46}+s_{64}+\frac{2}{3}s_{55}+\frac{2}{7}s_{66}\\
s_{77}+\frac{1}{3}s_{88}+\frac{1}{5}s_{99} \\
s_{78}+s_{87}+\frac{2}{5}s_{89}+\frac{2}{5}s_{98} \\
s_{79}+s_{97}+\frac{2}{3}s_{88}+\frac{2}{7}s_{99}\\
s_{(10)(10)}+\frac{1}{3}s_{(11)(11)}+\frac{1}{5}s_{(12)(12)} \\
s_{(10)(11)}+s_{(11)(10)}+\frac{2}{5}s_{(11)(12)}+\frac{2}{5}s_{(12)(11)} \\
s_{(10)(12)}+s_{(12)(10)}+\frac{2}{3}s_{(11)(11)}+\frac{2}{7}s_{(12)(12)}\\

\end{pmatrix}.
$$


\section{Main results}\label{Main}
In this section, we approximate solutions $y(t)$ of Systems (\ref{system1}) and (\ref{system2}). For this, we need the approximation of $y^{(n)} (t)$.
\begin{lemma}\label{fund}
Let $y(t)$ be a function and consider its approximation
$y_{r-1}(t)=    Y^{T} B(t) = B^{T}(t) Y$. If $Y^{(n)}$ denotes the approximation of $y^{(n)}(t)$, then for any $n \geq 1$, we have:
$$Y^{(n)} = J^n Y - \sum_{k=1}^n  J^k Y^{(n-k)}_0,$$
where $J =(P^{T})^{-1}$ and $Y_0^{(i)}$ are the approximations of the initial conditions $y_0^{(i)}$, for
$i= 0,\dots, n~-~1$.
\end{lemma}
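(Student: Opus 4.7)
The plan is to prove the formula by induction on $n$, with the base case derived from the operational matrix of integration $P$ and its action on the hybrid function vector $B(t)$.

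First I would establish the one-step recursion $Y^{(n)} = J\bigl(Y^{(n-1)} - Y^{(n-1)}_0\bigr)$. To do this, write $y^{(n)}(t) \simeq (Y^{(n)})^{T} B(t)$ and integrate both sides from $0$ to $t$:
\begin{equation*}
y^{(n-1)}(t) - y^{(n-1)}(0) = \int_0^t y^{(n)}(t')\, dt' \simeq (Y^{(n)})^{T}\int_0^t B(t')\, dt' \simeq (Y^{(n)})^{T} P\, B(t).
\end{equation*}
Approximating the constant $y^{(n-1)}(0)$ by $(Y^{(n-1)}_0)^{T} B(t)$ and the left-hand side by $(Y^{(n-1)})^{T} B(t)$, matching coefficients (i.e., using that the hybrid functions form a complete orthogonal system) and then transposing gives
\begin{equation*}
Y^{(n-1)} = P^{T} Y^{(n)} + Y^{(n-1)}_0.
\end{equation*}
Since $J = (P^{T})^{-1}$ is well-defined (by the explicit block-triangular form of $P$ given earlier, all diagonal blocks $E$ are invertible), I can solve for $Y^{(n)}$ to obtain $Y^{(n)} = J\bigl(Y^{(n-1)} - Y^{(n-1)}_0\bigr)$.

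Next I would check the base case $n = 1$, which is immediate: $Y^{(1)} = J(Y - Y_0) = JY - J Y^{(0)}_0$, matching the claimed formula. For the inductive step, assuming $Y^{(n)} = J^{n} Y - \sum_{k=1}^{n} J^{k} Y^{(n-k)}_0$, apply the one-step recursion:
\begin{equation*}
Y^{(n+1)} = J\bigl(Y^{(n)} - Y^{(n)}_0\bigr) = J^{n+1} Y - \sum_{k=1}^{n} J^{k+1} Y^{(n-k)}_0 - J\, Y^{(n)}_0.
\end{equation*}
Re-indexing the sum by $k' = k+1$ and absorbing the trailing term $J Y^{(n)}_0$ as the $k'=1$ contribution yields $Y^{(n+1)} = J^{n+1} Y - \sum_{k=1}^{n+1} J^{k} Y^{(n+1-k)}_0$, completing the induction.

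The main obstacle is really a bookkeeping one: legitimately passing from the approximate identity $(Y^{(n-1)})^{T} B(t) \simeq \bigl((Y^{(n)})^{T} P + (Y^{(n-1)}_0)^{T}\bigr) B(t)$ to the exact matrix equation $Y^{(n-1)} = P^{T} Y^{(n)} + Y^{(n-1)}_0$. This is justified because the approximations on both sides are taken with respect to the same complete orthogonal basis of hybrid functions, so equality of coefficients is forced; once this is granted the rest of the argument is a clean induction.
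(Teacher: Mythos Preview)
Your proof is correct and follows essentially the same approach as the paper: both establish the one-step recursion $Y^{(n)} = J\bigl(Y^{(n-1)} - Y^{(n-1)}_0\bigr)$ via the Fundamental Theorem of Calculus together with the operational matrix identity $\int_0^t B(t')\,dt' \simeq P B(t)$, and then carry out the same induction with the same re-indexing. The only cosmetic difference is that you state the general recursion up front, whereas the paper derives the base case $Y^{(1)} = J(Y - Y_0)$ first and then invokes the recursion at level $n+1$ in the inductive step without re-deriving it.
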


\begin{proof}
By the Fundamental Theorem of Calculus, we have $$y(t) = \int_{0}^t y' (s) ds + y(0).$$
Approximating $y(t), y'(t)$ and $y_0(t)$, we get
\begin{equation*}
\begin{split}
Y^T B(t) & \simeq \int_0^t (Y^{(1)})^T B(s)ds + Y_0^T B(t)\\
 & \simeq (Y^{(1)})^T \int _0^t B(s) ds + Y_0^T B(t) \\
 & \simeq (Y^{(1)})^T P B(t)  + Y_0^T B(t) \\
& \simeq \big((Y^{(1})^TP +Y_0^T \big)B(t).
\end{split}
\end{equation*}

Thus,
$Y^T=(Y^{(1)})^TP+Y_0^T$ and so $Y=P^TY^{(1)} +Y_0$, giving that $Y^{(1)}= J(Y-Y_0)$ and the result is true for $n=1$.\\

By induction, assume that the result is true for $n$ and prove it for $n+1$. We have
\begin{equation*}
\begin{split}
Y^{(n+1)} & =J \big(Y^{(n)}-Y^{(n)}_0 \big)=  J \big(J^n Y - \sum_{k=1}^n  J^k Y^{(n-k)}_0- Y^{(n)}_0 \big) \\
& = J^{n+1}Y- \sum_{k=1}^n J^{k+1}Y_0^{(n-k)}-JY_0^{(n)} = J^{n+1}Y- \sum_{k=0}^n J^{k+1}Y_0^{(n-k)} \\
& = J^{n+1}Y- \sum_{k=1}^{n+1} J^k Y_0^{(n+1-k)},
\end{split}
\end{equation*}
which is the desired result.
\end{proof}

We are now ready to approximate solutions of Systems (\ref{system1}) and  (\ref{system2}).

\subsection{Approximated Solution of the Fredholm Integro-Differential System (\ref{system1})}
Using the approximations (\ref{fctF}) and (\ref{matrixG}) of functions of one and two variables, System (\ref{system1}) can be approximated as
\begin{equation*}
\begin{split}
& B^T(t) Y + \lambda \int_0^ 1 B^T(t) K B(s) B^T(s) Y^{(m)} B^T(s) Y^{(n)} ds = B^T(t) F \\
 & \Longrightarrow  Y  + \lambda K \int_0^ 1 B(s) B^T(s) Y^{(m)} B^T(s) Y^{(n)} ds =  F \\
 & \Longrightarrow  Y  + \lambda K \int_0^ 1   \oversortoftilde{Y^{(m)}}B(s)B^T(s) Y^{(n)} ds =  F \\
 & \Longrightarrow  Y  + \lambda K  \oversortoftilde{ {Y^{(m)}}}\Big (\int_0^ 1 B(s) B^T(s) ds \Big ) Y^{(n)}= F \\
& \Longrightarrow Y+ \lambda K \oversortoftilde{ {Y^{(m)}}} L Y^{(n)} =F.
\end{split}
\end{equation*}
Using Lemma \ref{fund}, the last equation becomes
\begin{equation}
Y+ \lambda K \Big [  \oversortoftilde{ J^m Y -\sum_{k=1}^m J^k Y_0^{(m-k)}} \Big] L \Big[ {J^n Y -\sum_{k=1}^n J^k Y_0^{(n-k)}}\Big]=F. \label{FredSol}
\end{equation}

This is a nonlinear system of $rq$ equations in $rq$ variables which can be solved by any iterative method.

\subsection{Approximated Solution of the Volterra Integro-Differential System (\ref{system2})}
Using the approximations (\ref{fctF}) and (\ref{matrixG}), System (\ref{system2}) can be approximated as:
\begin{equation*}
\begin{split}
& Y^T B(t) + \beta \int_0^t B^T(t) G B(s)  B^T(s) Y^{(m)} B^T(s)   Y^{(n)} ds = F^T B(t) \\
&  \Longrightarrow  Y^T B(t)  + \beta  B^T(t) G \int_0^t   B(s)  B^T(s) Y^{(m)} B^T(s)   Y^{(n)} ds = F^T B(t) \\
&    \Longrightarrow Y^T B(t)  + \beta B^T(t) G \oversortoftilde{ Y^{(m)}} \int_0^t B(s)  B^T(s) Y^{(n)}  ds = F^T B(t) \\
& \Longrightarrow  Y^T B(t)  + \beta B^T(t) G \oversortoftilde{ Y^{(m)}} \; \oversortoftilde{ Y^{(n)}} \int_0^t B(s) ds=  F^T B(t) \\
& \Longrightarrow Y^T B(t) + \beta B^T(t) G \oversortoftilde{ Y^{(m)}} \;\oversortoftilde{Y^{(n)}} P B(t)= F^T B(t).
\end{split}
\end{equation*}

Consider the matrix $S := G \oversortoftilde{Y^{(m)}} \; \oversortoftilde{ Y^{(n)}} P $. We obtain
\begin{equation*}
\begin{split}
& Y^T B(t) + \beta B^T(t) S B(t)= F^T B(t)\\
& \Longrightarrow Y^T B(t) +\beta \widehat{S}B(t)=F^T B(t).
\end{split}
\end{equation*}
Hence,
\begin{equation}
Y^T  +\beta \widehat{S}=F^T. \label{VoltSol}
\end{equation}

Finally, using Lemma~\ref{fund} for $Y^{(m)}$ and $Y^{(n)}$, we get a nonlinear system  which can be solved by any iterative method.

\section{Error Analysis}\label{Error}
We assume that the function $y(t)$ is sufficiently smooth on the interval $[0, 1]$. Suppose that $t_0, t_1, \cdots, t_\mu$ are the roots of $\mu+1$ degree shifted Chebyshev polynomial $P_\mu(t)$ in $[0, 1]$ that interpolates $y(t)$ at the nodes $t_i$, $0\leq i \leq \mu$. 
The error in the interpolation is given in \cite{er1} by
\begin{equation}
y(t)-P_\mu(t) = \frac{d^{\mu+1}}{dt^{\mu+1}} (y(\delta)) \frac{\Pi_{i=0}^\mu (t-t_i)}{(\mu+1)!}, \label{apr1}
\end{equation}
for some $\delta\in [0, 1]$. This shows that 
\begin{equation}
\vert y(t)-P_\mu(t) \vert \leq \frac{M}{2^{2\mu+1}(\mu+1)!},
\end{equation}
where $M=\max_{t\in [0,1]}\big\vert \frac{d^{\mu+1}}{dt^{\mu+1}}(y(t)) \big\vert$.\\

We recall here that the $L_2$ norm of a function $y:[0, 1] \longrightarrow \mathbb{R}$ is given by $\Vert y \Vert_2 = \Big(\int_0^1 y^2(t) dt\Big)^{\frac 12}$.

\begin{thm}
	If $y_\mu(t) = B^T(t) Y$ is the best approximation of the solution $y(t)$ obtained using Legendre polynomials then 
	\begin{equation}
	\Vert y(t) - y_\mu(t) \Vert_2 \leq \frac{M}{2^{2\mu+1}(\mu+1)!}, \label{est}
	\end{equation}
	for some constant $M>0$.
\end{thm}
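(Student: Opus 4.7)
The plan is to exploit the defining property of the best $L^2$ approximation, together with the Chebyshev interpolation bound (\ref{apr1}) already established in the preceding paragraphs.

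First, I would invoke the best approximation property: since $y_\mu(t)=B^T(t)Y$ is the best $L^2$ approximation of $y$ in the finite-dimensional subspace $V \subset L^2[0,1)$ spanned by the hybrid functions $\{b_{km}\}$, we have
\begin{equation*}
\Vert y-y_\mu\Vert_2 \;\le\; \Vert y-\phi\Vert_2 \qquad \text{for every } \phi\in V.
\end{equation*}
The idea is to choose a concrete and convenient $\phi$, namely the Chebyshev interpolation polynomial $P_\mu$ from the preceding discussion, which interpolates $y$ at the roots $t_0,\dots,t_\mu$ of the shifted Chebyshev polynomial on $[0,1]$.

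Next I would check that $P_\mu \in V$. Each hybrid function $b_{km}$ restricts on the $k$-th subinterval $[(k-1)/q,k/q)$ to a Legendre polynomial of degree $m\le r-1$ and vanishes elsewhere, so $V$ contains all functions that are polynomials of degree at most $r-1$ on each subinterval. A global polynomial $P_\mu$ of degree $\mu\le r-1$ is, in particular, a piecewise polynomial of degree $\le r-1$ on each subinterval (with the extra property of continuity at the breakpoints). Hence $P_\mu$ lies in $V$ and is an admissible competitor in the best-approximation inequality.

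Then I would pass from the pointwise bound to the $L^2$ bound. Using the trivial estimate $\Vert h\Vert_2 \le \Vert h\Vert_\infty$ on the unit interval and the interpolation inequality displayed just before the theorem,
\begin{equation*}
\Vert y-P_\mu\Vert_2 \;=\; \Bigl(\int_0^1 (y(t)-P_\mu(t))^2\,dt\Bigr)^{1/2} \;\le\; \max_{t\in[0,1]}\bigl\vert y(t)-P_\mu(t)\bigr\vert \;\le\; \frac{M}{2^{2\mu+1}(\mu+1)!},
\end{equation*}
where $M=\max_{t\in[0,1]}\bigl|\tfrac{d^{\mu+1}}{dt^{\mu+1}}y(t)\bigr|$. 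Chaining this with the best-approximation inequality from Step 1 yields (\ref{est}).

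I do not expect a serious obstacle here; the only delicate point is the verification that the Chebyshev interpolant really belongs to the hybrid-function span, which implicitly requires $\mu\le r-1$ (otherwise the statement should be interpreted with $\mu$ governing the degree used on each subinterval). Provided that alignment of parameters is in force, the proof is a one-line application of the best-approximation property combined with (\ref{apr1}).
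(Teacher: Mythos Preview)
Your proof is correct and follows essentially the same route as the paper: use the best-approximation property to compare $y_\mu$ against the Chebyshev interpolant $P_\mu$, then invoke the pointwise bound preceding the theorem and integrate. You are in fact slightly more careful than the paper, since you explicitly verify that $P_\mu$ lies in the span of the hybrid functions (hence requiring $\mu\le r-1$), a point the paper leaves implicit.
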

\begin{proof} Let $X_\mu$ be the space of all polynomials of degree less than or equal to $\mu$. Since $y_\mu$ is the best approximation to $y$, $\Vert y - y_\mu \Vert_2 \leq \Vert y - g \Vert_2$, for any arbitrary polynomial $g$ in $X_\mu$. Therefore by using (\ref{apr1}), we get
\begin{eqnarray}
\Vert y - y_\mu \Vert_2^2 &=& \int_0^1 \left(y(t)-y_\mu(t)\right)^2 dt\nonumber \\
&\leq& \int_0^1 \left(y(t)-P_\mu(t)\right)^2 dt\nonumber \\
&\leq& \left(\frac{M}{2^{2\mu+1}(\mu+1)!}\right)^2 \int_0^1  dt = \nonumber \left(\frac{M}{2^{2\mu+1}(\mu+1)!}\right)^2.
\end{eqnarray} 
The result is obtained by taking square-root on both sides.
\vspace{-0.05cm}
\end{proof}

\section{Numerical examples}\label{Examples}

In this section, we apply the methods described in Section \ref{Main} to some numerical examples to solve Systems (\ref{system1}) and (\ref{system2}).

\begin{example}
Consider the following Fredholm integro-differential system
\begin{eqnarray}\label{ex1}
\left \{
\begin{array}{l} 
y(t)+ \int_0^1 e^{t-s} y(s) y^{'}(s) ds =e^{t+1},\\ \\
y(0)=1.
\end{array}
\right.
\end{eqnarray}
\end{example}
Comparing with the standard form of System (\ref{system1}), we get $\lambda=1,$ $k(t, s)=e^{t-s}$, $m=0,$ $n=1$,  $f(t)=e^{t+1}$, $l=0$ and $a_0=1$.\\

\textbf{Case 1:}  First we consider $r=2$ and $q =1$.
It can be verified that $B(t) = (1, 2t-1)^T$ and
\begin{equation*}
K=\left( \begin{array}{ccc}
e+ \frac 1e - 2 &  3 (e+ \frac 3e -4) \\
 3 (-e + 4 - \frac 3e)& 9 (6-e -\frac 9e)
\end{array} \right).
\end{equation*}

  The matrix approximations $F$ of the function $f(t)=e^{t+1}$ and $Y_0$  of $y(0)$ are respectively given by
  \[  F =   \left( \begin{array}{cc}
  e^2- e \\
  -3 e^2+ 9 e
  \end{array} \right)\]
 and
 \[  Y_0 =   \left( \begin{array}{cc}
 1\\
 0
 \end{array} \right).\]
From Equation (\ref{FredSol}), we deduce
 \[  Y =   \left( \begin{array}{cc}
 e-1\\
 -3e+ 9
 \end{array} \right).\] \\
 Using the approximation $y_1(t) = Y^{T} B(t) = B^{T}(t) Y$, we get $y_1(t)=4e-10+(18-6e)t$. In Figure \ref{Fig.1}, we compare this approximate solution $y_1(t)$ with the exact solution $e^t$. The absolute errors at various values of $t$ are shown in Table \ref{table1}.\\

\vspace{-0.1cm}

 \begin{figure}[h]
 \centering
   \includegraphics[scale=1]{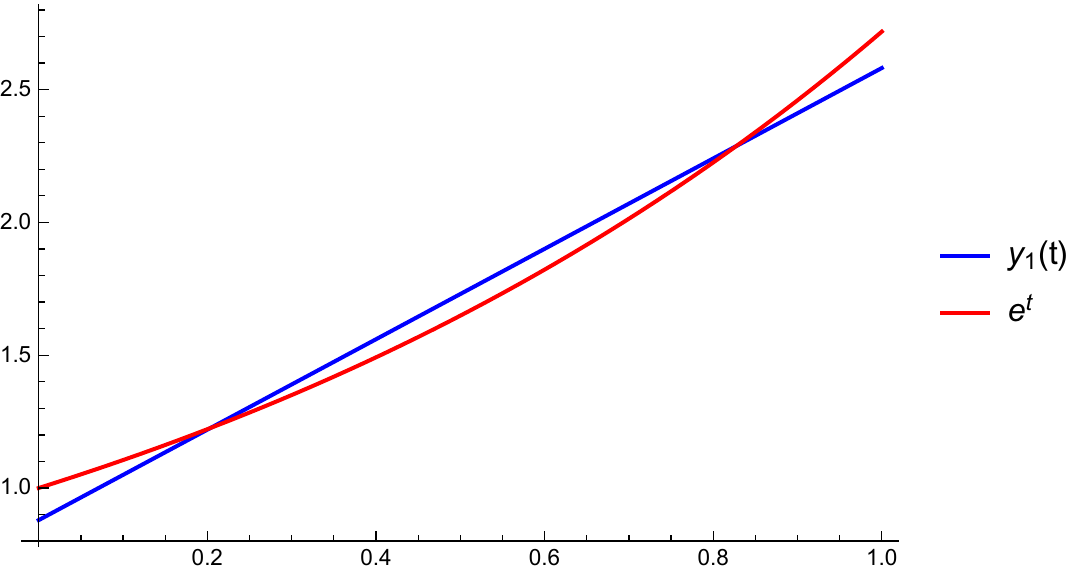}
   \caption{Comparison of approximate and exact solutions of System (\ref{ex1}) for the case $r=2$ and $q =1$.}
 \label{Fig.1}
 \end{figure}

 \begin{table}[h]
 \centering
 \begin{tabular}{|c|c|c|c|}
 \hline
$t$ & Error & $t$ & Error\\
\hline
0.0 &  0.120825 & 0.5 & 0.08146\\
\hline
0.1 & 0.05579 & 0.6 & 0.07827\\
\hline
0.2 & 0.00182 & 0.7 & 0.05683\\
\hline
0.3 & 0.03992 & 0.8 & 0.01525\\
\hline
0.4 & 0.06816 & 0.9 & 0.04861\\
\hline
 \end{tabular}
 \caption{Absolute errors in solution of System (\ref{ex1}) with $r=2$ and $q =1$.}
 \label{table1}
 \end{table}
This is degree 1 approximation, therefore $\mu=1$. Further, $M=\max_{[0, 1]}y'(t) = e $. The error estimate by using (\ref{est}) is $M/2^4 \approxeq 0.169893$. It can be checked from Table \ref{table1} that our computed~values are less than this error-bound.\\

 \textbf{Case 2:}  Now, we consider $r=3$ and $q =4$. A $12\times 12$ matrix $K$ is given by:
\begin{eqnarray}
K&=&\left(\begin{matrix} 
1.0052 & -0.1255 & 0.0052 & 0.7829 & -0.0978 & 0.0041 \nonumber\\
0.1255 & -0.0157 & 0.0007 & 0.0978 & -0.0122 & 0.0005 \nonumber\\ 
0.0052 & -0.0007 & 0.0 & 0.00401 & -0.0005 & 0.0 \nonumber\\
1.2907 & -0.1612 & 0.0067 & 1.0052 & -0.1255 & 0.0052 \nonumber\\
0.1612 & -0.0202 & 0.0008 & 0.1255 & -0.0157 & 0.0007 \nonumber\\
0.0067 & -0.0008 & 0.0    & 0.0052 & -0.0007 & 0.0  \nonumber\\
1.6573 & -0.2070 & 0.0086 & 1.2907 & -0.1612 & 0.0067 \nonumber\\
0.2070 & -0.0258 & 0.0011 & 0.1612 & -0.0201 & 0.0008 \nonumber\\
0.0086 & -0.0011 & 0.0 & 0.0067 & -0.0008 &  0.0 \nonumber\\
2.1281 & -0.2658 & 0.0111 & 1.6573 & -0.2070 & 0.0086 \nonumber\\
0.2657 & -0.0332 & 0.0014 & 0.2070 & -0.0258 & 0.0011 \nonumber\\
0.0111 & -0.0014 & 0.0 & 0.0086 & -0.0011 & 0.0\nonumber
\end{matrix}
\right. \\
\nonumber\\
&& \left. \begin{matrix}
0.6097 & -0.0761 & 0.0032 & 0.4748 & -0.0593 & 0.0025 \nonumber\\
 0.0761 & -0.0095 & 0.0003 & 0.0593 & -0.0074 & 0.0003 \nonumber\\
 0.0032 & -0.0004 & 0.0    & 0.0025 & -0.0003 & 0.0 \nonumber\\
 0.7829 & -0.0978 & 0.0040 & 0.6097 & -0.0761 & 0.0032 \nonumber\\
 0.0978 & -0.0122 & 0.0005 & 0.0761 & -0.0095 & 0.0004 \nonumber\\
 0.0041 & -0.0005 & 0.0    & 0.0032 & -0.0004 & 0.0    \nonumber\\
 1.0052 & -0.1255 & 0.0052 & 0.7829 & -0.0978 & 0.00401 \nonumber\\
 0.1255 & -0.0157 & 0.0007 & 0.0978 & -0.0122 & 0.0005 \nonumber\\
 0.0052 & -0.0007 & 0.0 & 0.0041 & -0.0005 & 0.0 \nonumber\\
 1.2907 & -0.1612 & 0.0067 & 1.0052 & -0.1255 & 0.0052 \nonumber\\
 0.1612 & -0.0201 & 0.0008 & 0.1255 & -0.0157 & 0.0007 \nonumber\\
 0.0067 & -0.0008 & 0.0 & 0.0052 & -0.0007 & 0.0 \nonumber
\end{matrix}\right)
\end{eqnarray}
 Other approximations in this case are as below:
 
 \begin{eqnarray*}
   B(t)  &=& \left( \chi_{[0, 1/4)},  (-1+8t) \chi_{[0, 1/4)}, (1-24t+96t^2) \chi_{[0, 1/4)}, \chi_{[1/4, 1/2)}, \right.\label{vecB}\\
   &&   (-3+8t) \chi_{[1/4, 1/2)}, (13-72t+96t^2) \chi_{[1/4, 1/2)}, \chi_{[1/2, 3/4)}, (-5+8t)\chi_{[1/2, 3/4)},\nonumber\\
  && \left. (37-120t+96t^2) \chi_{[1/2, 3/4)}, \chi_{[3/4, 1)}, (-7+8t) \chi_{[3/4, 1)}, (73-168t+96t^2)\chi_{[3/4, 1)}\right), \nonumber
   \end{eqnarray*}
 (where $\chi_A$ is the characteristic function of a set $A$)
 
 \begin{eqnarray}
 F &=& \left(3.08824, 0.385629, 0.0160607, 3.96538, 0.495157, 0.0206224, \right.\nonumber\\
 &&\, \, \left.5.09165,   0.635795, 0.0264796, 6.53781, 0.816377, 0.0340005\right)^T, \nonumber
 \end{eqnarray}

\begin{equation*}
 Y_0 = \left( 1, 0, 0, 1, 0, 0, 1, 0, 0, 1, 0, 0\right)^T, 
 \end{equation*}
and
\begin{eqnarray}
 Y &=& \left(  1.1361, 0.141865, 0.00590841, 1.45878, 0.182158, 0.00758655, \right.\nonumber\\
 &&\, \, \left. 1.87312, 0.233896, 0.00974132, 2.40513, 0.300328, 0.0125081\right)^T. \nonumber
 \end{eqnarray}
  The approximate solution of (\ref{ex1}) is given by
  \begin{eqnarray}
  y_2(t) &=& 1.1361 \chi_{[0, 1/4)}+1.45878 \chi_{[1/4, 1/2)} + 1.87312 \chi_{[1/2, 3/4)}+2.40513 \chi_{[3/4, 1)} \nonumber\\
  &&+0.3 (-7+8t) \chi_{[3/4, 1)}+ 0.233896 (-5+8t)\chi_{[1/2, 3/4)} \nonumber\\
  &&  +0.182158 (-3+8t) \chi_{[1/4, 1/2)} + 0.14187(-1+8t) \chi_{[0, 1/4)} \nonumber\\
    && + 0.0125(73-168t+96t^2)\chi_{[3/4, 1)}+ 0.0097(37-120t+96t^2) \chi_{[1/2, 3/4)} \nonumber\\
      &&+ 0.00759 (13-72t+96t^2) \chi_{[1/4, 1/2)} + 0.00591 (1-24t+96t^2) \chi_{[0, 1/4)}.\nonumber
  \end{eqnarray}
   Figure \ref{Fig.2} shows the graphs of the approximate solution $y_2(t)$ and the exact solution $e^t$. The absolute errors at various values of $t$ are given in Table \ref{table2}. It can be observed that, in this case, the approximate solution is well in agreement with the exact solution. Further, using (\ref{est}), the error bound is $0.01416$. Table \ref{table2} shows that our computed maximum value is $0.000145961$.

 \begin{figure}[h]
  \centering
    \includegraphics[scale=1]{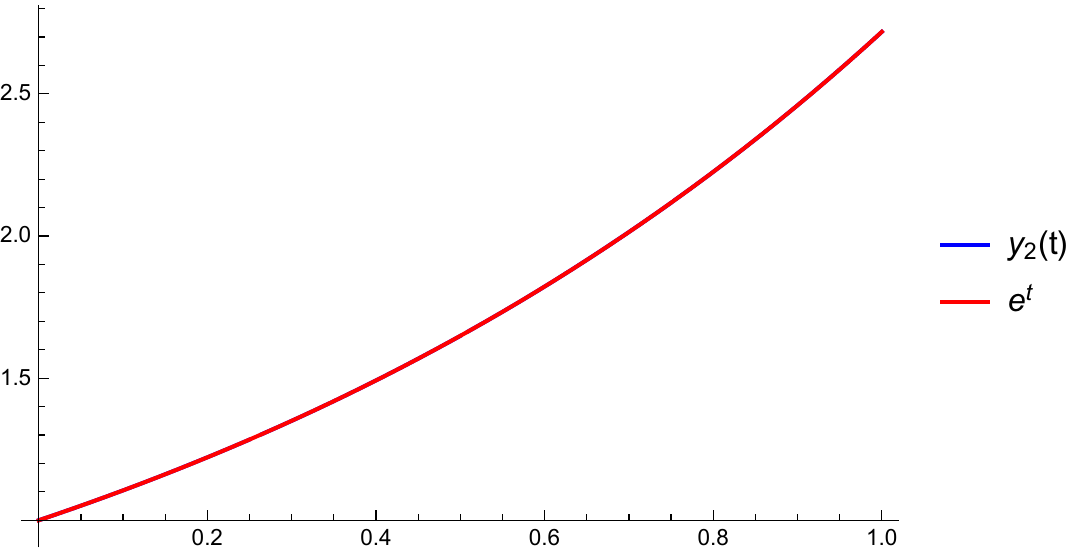}
    \caption{Comparison of approximate and exact solutions of System (\ref{ex1}) for the case $r=3$ and $q =4$.}
    \label{Fig.2}
  \end{figure}
 
  \begin{table}[h]
  \centering
  \begin{tabular}{|c|c|c|c|}
  \hline
 $t$ & Error & $t$ & Error\\
 \hline
   0.0 & 0.000145961 &  0.5 &  0.000240649\\
  \hline
 0.1 & 0.0000409679 & 0.6 & 0.0000675446\\
 \hline
 0.2 & 0.0000553281 & 0.7 & 0.0000912206\\
 \hline
 0.3 & 0.0000656897 & 0.8 & 0.000108304\\
 \hline
 0.4 & 0.0000536172 & 0.9 & 0.0000883998\\
 \hline
  \end{tabular}
  \caption{Absolute errors in solution of System (\ref{ex1}) with $r=3$ and $q =4$.}
  \label{table2}
  \end{table} 

\begin{example}
Consider the following Volterra integro-differential system
\begin{eqnarray}\label{ex2}
\left \{
\begin{array}{l} 
y(t)+ \int_0^t \sin(t-s) y(s) y^{'}(s) ds = 2 t^3 + t^2 -12 t + 12 \sin(t),\\ \\
y(0)=0.
\end{array}
\right.
\end{eqnarray}
\end{example} 

Comparing with the standard form of System (\ref{system2}), we get $\beta=1,$ $g(t, s)=\sin(t-s)$, $m=0,$ $n=1$,  $f(t)=2 t^3 + t^2 -12 t + 12 \sin(t)$, $l=0$ and $a_0=0$. We take $r=3$ and $q =4$. Following the procedure described in Section 2, we get 
  \begin{eqnarray}
   F &=& \left(0.0208496, 0.0312848, 0.0104457, 0.146854, 0.0951443, 0.0109877, \right.\nonumber\\
   &&\, \, \left. 0.406543, 0.166108, 0.0129514, 0.824576, 0.255306, 0.0171862\right)^T, \nonumber
   \end{eqnarray}
          
 \begin{eqnarray}
 G &=&\left(\begin{matrix} 
   0 & -0.1245 & 0 & -0.2461 & -0.1206 & 0.0013 \nonumber\\
   0.1245 & 0 & -0.0006 & 0.1206 & -0.0039 & -0.0006\nonumber\\ 
  0 & 0.0006 & 0 & 0.0013 & 0.0006 & 0 \nonumber\\
  0.2461 & -0.1206 & -0.0013 & 0 & -0.1245 & 0  \nonumber\\
   0.1206 & 0.0039 & -0.0006 & 0.1245 & 0 & -0.0006 \nonumber\\ 
   -0.0013 & 0.0006 & 0 & 0 & 0.0006 & 0 \nonumber\\
  0.4769 & -0.1092 & -0.0025 & 0.2461 & -0.1206 & -0.0013  \nonumber\\
   0.1092 & 0.0075 & -0.0006 & 0.1206 & 0.0039 & -0.0006 \nonumber\\ 
  -0.0025 & 0.0006 & 0 & -0.0013 & 0.0006 & 0 \nonumber\\
  0.6781 & -0.0911 & -0.0035 & 0.4769 & -0.1092 & -0.0025  \nonumber\\
   0.0911 & 0.0106 & -0.0005 & 0.1092 & 0.0075 & -0.0006 \nonumber\\ 
   -0.0035 & 0.0005 & 0 & -0.0025 & 0.0006 & 0 \nonumber
 \end{matrix}
 \right. \\
 \nonumber\\
 && \left. \begin{matrix}
  -0.4769 & -0.1092 & 0.0025 & -0.6781 & -0.0911 & 0.0035 \nonumber\\
  0.1092 & -0.0075 & -0.0006 & 0.0912 & -0.0106 & -0.0005 \nonumber\\ 
  0.0025 & 0.0006 & 0 & 0.0035 & 0.0005 & 0 \nonumber\\
 -0.2461 & -0.1206 & 0.0013 & -0.4769 & -0.1092 & 0.0025  \nonumber\\
  0.1206 & -0.0039 & -0.0006 & 0.1092 & -0.0075 & -0.0006 \nonumber\\ 
   0.0013 & 0.0006 & 0 & 0.0025 & 0.0006 & 0 \nonumber\\
  0 & -0.1245 & 0 & -0.2461 & -0.1206 & 0.0013  \nonumber\\
    0.1245 & 0 & -0.0006 & 0.1206 & -0.0039 & -0.0006 \nonumber\\ 
    0 & 0.0006 & 0 & 0.0013 & 0.0006 & 0 \nonumber\\
   0.2461 & -0.1206 & -0.00128 & 0 & -0.1245 & 0  \nonumber\\
   0.1206 & 0.0039 & -0.0006 & 0.1245 & 0 & -0.0006   \nonumber\\ 
     -0.0013 & 0.0006 & 0 & 0 & 0.0006 & 0  \nonumber
 \end{matrix}\right),
 \end{eqnarray}  
   
 \begin{equation*}
  Y_0 = \left( 0, 0, 0, 0, 0, 0, 0, 0, 0, 0, 0, 0\right)^T, 
  \end{equation*}  
and $B(t)$ is same as given in the previous example. Using Equation (\ref{VoltSol}), we get
\begin{eqnarray}
 Y &=& \left(  0.0208333, 0.0312487, 0.0104375, 0.145833, 0.0937492, 0.0104781, \right.\nonumber\\
 &&\, \, \left. 0.395833, 0.15625, 0.0105145, 0.770834, 0.218753, 0.010543\right)^T. \nonumber
 \end{eqnarray}

The approximate solution of System (\ref{ex2}) is given by
 \begin{eqnarray*} 
  y_2(t) &=& 0.0208333 \chi_{[0, 1/4)} + 0.145833 \chi_{[1/4, 1/2)} \\ 
    && + 0.395833 \chi_{[1/2, 3/4)} + 0.770834 \chi_{[3/4, 1)} \nonumber\\
      && +0.218753 (-7+8t) \chi_{[3/4, 1)}+ 0.15625 (-5+8t)\chi_{[1/2, 3/4)}\nonumber\\
  &&  +0.0937492 (-3+8t) \chi_{[1/4, 1/2)} + 0.0312487(-1+8t) \chi_{[0, 1/4)} \nonumber\\
    && + 0.010543(73-168t+96t^2)\chi_{[3/4, 1)}+ 0.0105145(37-120t+96t^2) \chi_{[1/2, 3/4)} \nonumber\\
      &&+ 0.0104781 (13-72t+96t^2) \chi_{[1/4, 1/2)} + 0.0104375 (1-24t+96t^2) \chi_{[0, 1/4)}.\nonumber
  \end{eqnarray*}
The approximate solution $y_2(t)$ is compared with the exact solution $t^2$ in Figure \ref{Fig3}. Table \ref{table3} shows the absolute error in the solution at different values of $t$.

 \begin{figure}[h]
  \centering
    \includegraphics[scale=1]{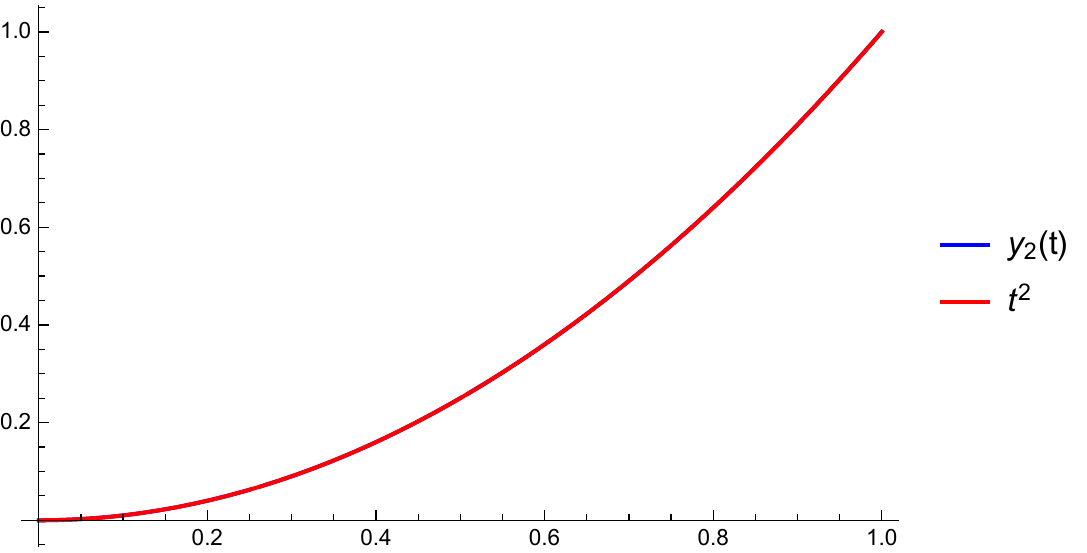}
    \caption{Comparison of approximate and exact solutions of System (\ref{ex2}) for the case $r=3$ and $q =4$.}
    \label{Fig3}
  \end{figure}
 
  \begin{table}[h]
  \centering
  \begin{tabular}{|c|c|c|c|}
  \hline
 $t$ & Error & $t$ & Error\\
 \hline
   0.0 & 0.0000221689 &  0.5 &  0.0000975226\\
  \hline
 0.1 & 8.9$\times 10^{-6}$ & 0.6 & 0.0000429716\\
 \hline
 0.2 & 4.99$\times 10^{-8}$  & 0.7 & 4.32$\times 10^{-6}$\\
 \hline
 0.3 & 3$\times 10^{-6}$ & 0.8 & 3.76$\times 10^{-6}$\\
 \hline
 0.4 & 0.0000271471 & 0.9 & 0.0000547743\\
 \hline
  \end{tabular}
  \caption{Absolute errors in solution of System (\ref{ex2}) with $r=3$ and $q =4$.}
  \label{table3}
  \end{table} 
Using (\ref{est}), the error-bound for this example is approximately $0.0104167$ and our value from Table \ref{table3} is $0.0000271471$.

\section{Conclusion}
 In this work, we have discussed an efficient method to solve a class of Fredholm and Volterra integro-differential equations. Our method is based on a new approximation for the derivatives of the equations' solutions using  hybrid and block-pulse functions. The absolute errors reported in tables show that the approximate solution is in a good agreement with the exact solution.  It is verified in each example that the practical error in our method is less than the theoretical error-bound. In fact, this method is highly efficient, very easy and a powerful mathematical tool for finding the numerical solution of  some class of Fredholm and Volterra integro-differential equations. The approximate solutions are found by using the computer code written in Matlab. The method is computationally attractive, and applications are demonstrated through illustrative examples. For future research works, we can use this method to solve various kinds of problems such as higher dimensional problems, stochastic integro-differential equations and partial integro-differential equations of fractional order with additional work.

\end{document}